\newtheorem{theo}{Theorem}[section]
\newtheorem{lema}[theo]{Lemma}
\def\A{\mbox{\boldmath $A$}}
\def\Re{\mathbb R}
\def\dgr{\mathop{\rm dgr }\nolimits}
\def\excu{\mbox{$\varepsilon_u$}}
\def\A{{\mbox {\boldmath $A$}}}
\def\E{{\mbox {\boldmath $E$}}}
\def\G{\Gamma}
\def\J{{\mbox {\boldmath $J$}}}
\def\A{{\mbox {\boldmath $A$}}}
\def\matrix0{{\mbox {\boldmath $O$}}}
\def\e{{\mbox{\boldmath $e$}}}
\def\vec0{\mbox{\bf 0}}
\def\vecalpha{{\mbox{\boldmath $\alpha$}}}
\def\vecrho{{\mbox{\boldmath $\rho$}}}
\def\dgr{\mathop{\rm dgr }\nolimits}
\def\ecc{\mathop{\rm ecc }\nolimits}
\def\tr{\mathop{\rm tr }\nolimits}
\def\sp{\mathop{\rm sp }\nolimits}
\def\dgr{\mathop{\rm dgr }\nolimits}
\def\ecc{\mathop{\rm ecc }\nolimits}
\def\tr{\mathop{\rm tr }\nolimits}
\def\sp{\mathop{\rm sp }\nolimits}
\begin{document}
\title{The Spectral Excess Theorem \\ for Distance-Biregular Graphs.
}

\author{M.A. Fiol
\\ \\
{\small Universitat Polit\`ecnica de Catalunya}\\
{\small Departament de Matem\`atica Aplicada IV} \\
{\small Barcelona, Catalonia} \\
{\small e-mail:  {\tt fiol@ma4.upc.edu}}
}


\maketitle

\begin{abstract}
The spectral excess theorem for distance-regular graphs states that a regular (connected) graph is distance-regular if and only if its spectral-excess equals its average excess. A bipartite graph $\G$ is distance-biregular when it is distance-regular around each vertex and the intersection array only depends on the stable set such a vertex belongs to. In this note we derive a new version of the spectral excess theorem for bipartite distance-biregular graphs.
\end{abstract}

\section{Introduction}

The spectral excess theorem, due to Fiol and Garriga \cite{fg97}, states that a regular (connected) graph $\G$ is distance-regular if and only if its spectral-excess (a number which can be computed from the spectrum of $\G$) equals its average excess (the mean of the numbers of vertices at maximum distance from every vertex), see Van Dam \cite{vd08}, and Fiol, Gago and Garriga \cite{fgg10} for short proofs.

Recently, some local as well as global approaches to that result have been used to obtain new versions of the theorem for nonregular graphs, and also to study the problem of characterizing those graphs which have the corresponding distance-regularity property (see, for instance, Dalf\'{o}, Van Dam, Fiol, Garriga, and
Gorissen \cite{ddfgg11}).
One of these concepts is that of `pseudo-distance-regularity' around a vertex, introduced by Fiol, Garriga, and Yebra \cite{fgy96b}, which generalizes the known concept of distance-regularity around a vertex and, in some cases, coincides with that of distance-biregularity, intended for bipartite graphs.

A bipartite graph is distance-biregular when it is distance-regular around each vertex and the intersection array only depends on the stable set such a vertex belongs to (see Delorme \cite{d94}). 
In general, distance-regular and distance-biregular graphs have found many `applications'. Some examples are completely regular codes, symmetric designs, some partial geometries, and
some non-symmetric association schemes.

In this note we present a new version of the spectral excess theorem for bipartite distance-biregular graphs.

\section{Preliminaries}
\label{sec:prelim}
Let us first introduce some basic notation and results. For more background on
graph spectra and different concepts of  distance-regularity in graphs  see, for instance, \cite{b93,bcn89,bh12,cds82,f02,fgy96b}.
Let $\G=(V,E)$ be a (connected) graph with $n=|V|$ vertices and diameter $D$. The set of vertices at distance $i$ from a given vertex $u\in V$ is
denoted by $\G_i(u)$ for $i=0,1,\dots$, and $N_i(u)=\G_0(u)\cup
 \cdots\cup \G_i(u)$.
Let $k_i(u)=|\G_i(u)|$ and $\overline{k}_i=\frac{1}{n}\sum_{u\in V} k_i(u)$.

If $\G$ has adjacency matrix $\A$, its spectrum is denoted by
$\sp \G = \sp \A = \{\lambda_0^{m_0},\lambda_1^{m_1},\dots,
\lambda_d^{m_d}\},
$
where the different eigenvalues of $\G$ are in decreasing order,
$\lambda_0>\lambda_1>\cdots >\lambda_d$, and the superscripts
stand for their multiplicities $m_i=m(\lambda_i)$.
From the positive ($\lambda_0$-)eigenvector $\vecalpha$  normalized in such a way that $\|\vecalpha\|^2=n$, consider the  {\em weight function} $\vecrho : 2^V\rightarrow \Re^+$
such that $\vecrho(U)=\vecrho_U=\sum_{u\in U}\alpha_u\e_u$ for $U\neq \vec0$, where $\e_u$ stands for the coordinate $u$-th vector, and $\vecrho(\emptyset)=\vec0$.
In particular, $\vecrho_u=\vecrho_{\{u\}}$, so that $\vecrho$ assigns the number $\alpha_u$ to vertex $u$, and $\vecrho_V=\vecalpha$.

%

\subsection*{Orthogonal polynomials}
In our study we use some properties of orthogonal polynomials of a discrete variable (see C\`amara, F\`abrega, Fiol and Garriga \cite{cffg09}).
From a mesh  ${\cal M}=\{\lambda_0,\lambda_1,\ldots,\lambda_d\}$, $\lambda_0>\lambda_1>\cdots > \lambda_d$, of real numbers, and a weight function $w:{\cal M} \rightarrow \Re^+$, normalized in such a way that  $w(\lambda_0)+\cdots+w(\lambda_d)=1$, we consider the following scalar product in $\Re_d[x]$:
$$
\langle f,g\rangle = \frac 1n \sum_{i=0}^d w_i f(\lambda_i)g(\lambda_i)
$$
where, for short, $w_i=w(\lambda_i)$, $i=0,\ldots,d$.
Then, the sequence $r_0,r_1,\ldots,r_d$ of orthogonal polynomials with respect to such a scalar product, with $\dgr p_i=i$, normalized in such a way that $\|r_i\|^2=r_i(\lambda_0)$, and their sum polynomials $s_i=r_0+\cdots+r_i$, $i=0,\ldots, d$, satisfy the following properties:

\begin{lema}
\label{ortho-pol}
\begin{itemize}
\item[$(a)$]
$r_0=1$ and the constants of the three term recurrence $xr_i=b_{i-1}r_{i-1}+a_ir_i+c_{i+1}r_{i+1}$,  where $b_{-1}=c_{d+1}=0$, satisfy $a_i+b_i+c_i=\lambda_0$ for $i=0,\ldots,d$.
\item[$(b)$]
 $r_d(\lambda_0)=\frac{1}{w_0}\left(\sum_{i=0}^d \frac{w_0\pi_0^2}{w_i\pi_i^2}\right)^{-1}$, were $\pi_i=\prod_{j\neq i}|\lambda_i-\lambda_j|$, $i=0,\ldots,d$.
\item[$(c)$]
$1=s_0(\lambda_0)<s_1(\lambda_0)<\cdots <s_d(\lambda_0)=\frac{1}{w_0}$, and $s_d(\lambda_i)=0$ for every $i\neq 0$.
\end{itemize}
\end{lema}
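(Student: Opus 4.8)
The plan is to forget the graph-theoretic origin of $r_0,\dots,r_d$ and argue inside the inner-product space of functions on $\Mesh$ (normalized so that $\langle 1,1\rangle=1$), using three structural facts: multiplication by $x$ is self-adjoint, $\langle xf,g\rangle=\langle f,xg\rangle$; the top polynomial $r_d$ is fixed up to a scalar by orthogonality to $\Re_{d-1}[x]$; and the normalization $\|r_i\|^2=r_i(\lambda_0)$ turns the reproducing kernel of the space into $s_d$. Throughout I use that $r_i(\lambda_0)>0$ — since $\lambda_0=\max\Mesh$ lies to the right of all the (real, interior) zeros of $r_i$ and the leading coefficient is taken positive — which is exactly what makes the normalization admissible and legitimizes dividing by $r_i(\lambda_0)$. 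For $(a)$: writing $r_0=c$ and imposing $\|r_0\|^2=r_0(\lambda_0)$ gives $c^2=c$, so $r_0=1$; self-adjointness forces $xr_i$ to have nonzero components only along $r_{i-1},r_i,r_{i+1}$, which I name $b_{i-1},a_i,c_{i+1}$, and this is the recurrence. Computing $\langle xr_i,r_{i+1}\rangle$ in two ways and using the normalization yields the symmetry $c_{i+1}r_{i+1}(\lambda_0)=b_ir_i(\lambda_0)$; evaluating the recurrence at $x=\lambda_0$ and substituting this relation together with its shifted form $b_{i-1}r_{i-1}(\lambda_0)=c_ir_i(\lambda_0)$ collapses it to $\lambda_0r_i(\lambda_0)=(a_i+b_i+c_i)r_i(\lambda_0)$, and dividing by $r_i(\lambda_0)$ gives $a_i+b_i+c_i=\lambda_0$.

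For part $(b)$, the orthogonality $\langle r_d,f\rangle=0$ for every $f\in\Re_{d-1}[x]$ says that the vector $(w_ir_d(\lambda_i))_i$ is Euclidean-orthogonal to the $d$-dimensional subspace $\{(f(\lambda_i))_i:\dgr f\le d-1\}$ of $\Re^{d+1}$. Its one-dimensional normal is the order-$d$ divided-difference vector $(1/\pi_i')_i$ with $\pi_i'=\prod_{j\ne i}(\lambda_i-\lambda_j)$, because that functional annihilates $\Re_{d-1}[x]$. Hence $r_d(\lambda_i)=\kappa/(w_i\pi_i')$ for a single constant $\kappa$. Using $(\pi_i')^2=\pi_i^2$ and $\pi_0'=\pi_0>0$, imposing $\|r_d\|^2=r_d(\lambda_0)$ fixes $\kappa$, and feeding it back into $r_d(\lambda_0)=\kappa/(w_0\pi_0)$ reproduces the stated closed form.

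For part $(c)$, monotonicity is immediate: $s_i(\lambda_0)=\sum_{k=0}^i r_k(\lambda_0)=\sum_{k=0}^i\|r_k\|^2$ is a strictly increasing sum of positive terms beginning at $s_0(\lambda_0)=1$. The key step is to recognize that, under the chosen normalization, the reproducing kernel $K(x,y)=\sum_k r_k(x)r_k(y)/\|r_k\|^2$ specializes to $K(x,\lambda_0)=\sum_k r_k(x)=s_d(x)$. The reproducing identity $\langle f,K(\cdot,\lambda_0)\rangle=f(\lambda_0)$, valid for every function $f$ on the mesh, then reads $\sum_i w_if(\lambda_i)s_d(\lambda_i)=f(\lambda_0)$; since the values $f(\lambda_i)$ are free, matching coefficients forces $w_is_d(\lambda_i)=\delta_{i0}$, that is, $s_d(\lambda_i)=0$ for $i\ne0$ and $s_d(\lambda_0)=1/w_0$.

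The computations here are routine once the framework is set; the genuine content lies in two places. First, verifying $r_i(\lambda_0)>0$, so that the unorthodox normalization $\|r_i\|^2=r_i(\lambda_0)$ is both attainable and safe to divide by in $(a)$. Second, and this is what I expect to be the main obstacle, the identification in $(b)$ of the annihilator of $\Re_{d-1}[x]$ with the span of the divided-difference vector $(1/\pi_i')_i$: once that one-dimensionality is in hand, every undetermined constant in the statement is pinned down purely by the normalization.
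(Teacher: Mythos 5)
Your proof is correct, but there is nothing in the paper to compare it against: Lemma~\ref{ortho-pol} is stated without proof and imported wholesale from the reference \cite{cffg09}, so you have supplied the missing argument rather than an alternative to one. What you wrote is the standard self-contained derivation and all three parts check out: the self-adjointness of multiplication by $x$ plus the two-way computation of $\langle xr_i,r_{i+1}\rangle$ gives the symmetry $c_{i+1}r_{i+1}(\lambda_0)=b_ir_i(\lambda_0)$ that collapses the recurrence at $\lambda_0$; the identification of the Euclidean annihilator of $\{(f(\lambda_i))_i:\dgr f\le d-1\}$ with the span of $(1/\pi_i')_i$ is exactly the right (and only nontrivial) step in $(b)$, and the sign/normalization bookkeeping with $(\pi_i')^2=\pi_i^2$, $\pi_0'=\pi_0$ reproduces the stated closed form; and the reproducing-kernel identity $K(\cdot,\lambda_0)=s_d$ is the cleanest route to $(c)$. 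Two small remarks. First, you tacitly corrected a typo: the paper's displayed scalar product carries a factor $\frac1n$ \emph{and} the normalization $w_0+\cdots+w_d=1$, under which $\|1\|^2=1/n$ and $r_0=n$; your convention $\langle 1,1\rangle=1$ is the one under which $r_0=1$ and $s_d(\lambda_0)=1/w_0$ actually hold, and it is also the one consistent with the paper's later specialization $w_i=m_i/n$. Second, at the endpoints $i=0$ and $i=d$ your argument literally yields $a_0+b_0=\lambda_0$ and $a_d+c_d=\lambda_0$; to read these as $a_i+b_i+c_i=\lambda_0$ one needs the (implicit, standard) conventions $c_0=b_d=0$ in addition to the stated $b_{-1}=c_{d+1}=0$ --- worth one sentence, but not a gap.
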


As the notation suggests, in our context the mesh ${\cal M}$ is constituted by the eigenvalues of a graph $\G$, and the weight function is closely related to their (standard or local) multiplicities. More precisely, when $w_i=\frac{m_i}{n}$ we have the scalar product
$$
\langle f,g\rangle_V = \frac 1n \tr(f(\A)g(\A))= \frac 1n \sum_{i=0}^d m_i f(\lambda_i)g(\lambda_i),
$$
and the sequence $r_0,\ldots,r_d$ are called
the {\em predistance polynomials}. In this case, we  denote them by $p_0,\ldots,p_d$, and their sums by $q_0,\ldots,q_d$.
Moreover, it is known that the  {\it Hoffman  polynomial} $H=q_d$,
characterized by $H(\lambda_0)=n$ and $H(\lambda_i)=0$ for $i\neq 0$,
satisfies $H(\A)=\J$ if and only if $\G$ is regular
(see Hoffman \cite{hof63}).


A `local version' of the above approach is the following. Let
$\E_i$, $i=0,1,\ldots,d$, be the (principal) idempotent of $\G$.
Then,  for a given vertex $u$, the {\em $u$-local multiplicity of $\lambda_i$} is defined as
$m_u(\lambda_i)=(\E_i)_{uu}=\|\E_i\e_u\|^2 \ge 0$. As was shown by Fiol, Garriga and Yebra \cite{fgy96b}, the local multiplicities behave similarly as the (standard) multiplicities when we `see' the graph from the base vertex $u$. Indeed,
\begin{itemize}
\item
$\sum_{i=0}^d m_u(\lambda_i)=1$  (vs. $\sum_{i=0}^d m(\lambda_i)=n$).
\item
$\sum_{u\in V}m_u(\lambda_i)=m(\lambda_i)$.
\item
$(\A^{\ell})_{uu}=\sum_{i=0}^d m_u(\lambda_i)\lambda_i^{\ell}$
(vs. $\tr \A^{\ell}=\sum_{i=0}^d m(\lambda_i)\lambda_i^{\ell}$).
\end{itemize}


Then, given a vertex $u$ with $d_u+1$ different local eigenvalues (that is, those with nonzero local multiplicity), the choice $w_i=m_u(\lambda_i)$, $i=0,\ldots,d$, leads us to the scalar product
$$
\label{scalar-prod-loc}
\langle f,g \rangle_{u} = (g(\A)g(\A))_{uu}
=\sum_{i=0}^d m_{u}(\lambda_i)f(\lambda_i)g(\lambda_i).
$$
(Note that the sum has at most $d_{u}$ nonzero terms.) Then, the corresponding orthogonal sequence $r_0,\ldots,r_{d_u}$ is referred to as the {\em $u$-local predistance polynomials}, and they are denoted by $p_0^u,\ldots,p_{d_{u}}^u$.
Notice that, by the above property  of the local multiplicities,
$$
\langle f,g\rangle_V=\frac{1}{n}\sum_{u\in V}\langle f,g\rangle_{u}.
$$

\subsection*{The bipartite case}
Let $\G$ be a bipartite graph on $n$ vertices, with stables sets $V_1$ and $V_2$, on $n_1$ and $n_2$ vertices, respectively. Let $\G$ have diameter $D$, and let $D_i$ denote the maximum eccentricity of the vertices in $V_i$, $i=1,2$, so that $D=\max\{D_1,D_2\}$. As it is well known, $|D_1-D_2|\le 1$ and, without loss of generality, we here suppose that $D_1\ge D_2$. Given some integers $i\le D_1$ and $j\le D_2$, we will use the averages  $\overline{k}_{1,i}=\frac{1}{n_1}\sum_{u\in V_1}k_{i}(u)$
and $\overline{k}_{2,j}=\frac{1}{n_2}\sum_{v\in V_2}k_{j}(u)$.

Recall also that the spectrum of a bipartite graph is symmetric about zero: $\lambda_i=-\lambda_{d-i}$ and
$m_i=m_{d-i}$, $i=0,1,\ldots,d$.
%
Moreover its local eigenvalues and multiplicities also share the same property. In our context, we also have the following result.
\begin{lema}
\label{loc-mul-bip}
Let $G$ be a bipartite graph with vertex bipartition $V=V_1\cup V_2$.
Then, the local multiplicities satisfy:
\begin{itemize}
\item[$(a)$]
$\sum_{u\in V_1}m_u(\lambda_i)=\sum_{v\in V_2 }m_v(\lambda_i)=\frac{1}{2}m(\lambda_i)$, \quad $\lambda_i\neq 0$.
\item[$(b1)$]
$\sum_{u\in V_1}m_u(0)=\frac{1}{2}(n_1-n_2+m(0))$,
\item[$(b2)$]
 $\sum_{v\in V_2}m_v(0)=\frac{1}{2}(n_2-n_1+m(0))$.
\end{itemize}
\end{lema}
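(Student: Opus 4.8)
The plan is to exploit the standard $\pm1$ signing associated with the bipartition. Let $\D$ be the diagonal matrix with $(\D)_{uu}=1$ for $u\in V_1$ and $(\D)_{uu}=-1$ for $u\in V_2$. Since bipartiteness forces $\A$ to have purely off-diagonal block form, a direct check gives $\D\A\D=-\A$. Consequently, if $\v$ is a $\lambda_i$-eigenvector then $\A(\D\v)=-\D\A\v=-\lambda_i\D\v$, so $\D$ maps the $\lambda_i$-eigenspace onto the $(-\lambda_i)$-eigenspace; equivalently, on idempotents this reads $\D\E_i\D=\E_{d-i}$. The whole argument then rests on writing the two quantities I want to control as traces: since $(\E_i)_{uu}=m_u(\lambda_i)$ and $\D$ is diagonal with entries $\pm1$,
\[
\tr\E_i=\sum_{u\in V_1}m_u(\lambda_i)+\sum_{v\in V_2}m_v(\lambda_i)=m(\lambda_i),\qquad \tr(\D\E_i)=\sum_{u\in V_1}m_u(\lambda_i)-\sum_{v\in V_2}m_v(\lambda_i),
\]
so that knowing $\tr(\D\E_i)$ determines both partial sums.

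For $\lambda_i\neq0$ I would compute $\tr(\D\E_i)$ through a real orthonormal basis $\v_1,\dots,\v_{m(\lambda_i)}$ of the $\lambda_i$-eigenspace, writing $\E_i=\sum_j\v_j\v_j^{\top}$ and hence $\tr(\D\E_i)=\sum_j\langle\v_j,\D\v_j\rangle$. Because $\D\v_j$ lies in the $(-\lambda_i)$-eigenspace, which is orthogonal to the $\lambda_i$-eigenspace whenever $\lambda_i\neq-\lambda_i$, every term vanishes and $\tr(\D\E_i)=0$. Substituting this into the two trace identities and solving the resulting $2\times2$ system yields $\sum_{u\in V_1}m_u(\lambda_i)=\sum_{v\in V_2}m_v(\lambda_i)=\tfrac12 m(\lambda_i)$, which is part $(a)$.

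The delicate point is precisely the eigenvalue $0$, where the orthogonality argument breaks down because $\D$ now maps $\Ker\A$ to itself, so $\langle\v_j,\D\v_j\rangle$ need not vanish. Here I would avoid evaluating this trace directly and instead use completeness: from $\sum_i\E_i=\I$ one gets $\sum_i\tr(\D\E_i)=\tr\D=n_1-n_2$, and since every term with $\lambda_i\neq0$ was just shown to vanish, the surviving term forces $\tr(\D\E_z)=n_1-n_2$, where $\E_z$ denotes the idempotent of the eigenvalue $0$ (the identity degenerating to $n_1=n_2$, consistent with $m(0)=0$, when $0$ is not an eigenvalue). Combining $\tr\E_z=m(0)$ with $\tr(\D\E_z)=n_1-n_2$ and solving the associated $2\times2$ system gives $(b1)$ and $(b2)$. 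The only genuine obstacle is recognizing that the zero eigenvalue must be handled globally, through $\sum_i\E_i=\I$, rather than eigenspace by eigenspace.
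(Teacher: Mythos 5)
Your proof is correct, but it takes a genuinely different route from the paper's. The paper argues via moments: it double-counts closed $\ell$-walks to get $\sum_{u\in V_1}(\A^{\ell})_{uu}=\sum_{v\in V_2}(\A^{\ell})_{vv}$, expands each side as $\sum_i m_u(\lambda_i)\lambda_i^{\ell}$, and then invokes the resulting Vandermonde system over the nonzero eigenvalues to force each coefficient $\sum_{u\in V_1}m_u(\lambda_i)-\frac{1}{2}m(\lambda_i)$ to vanish; the value at $0$ is then recovered from $\sum_{u\in V_1}\sum_i m_u(\lambda_i)=n_1$. You instead work directly with the signing involution $\D$ ($\pm 1$ on the two stable sets), use $\D\A\D=-\A$ to see that $\D$ carries the $\lambda_i$-eigenspace onto the $(-\lambda_i)$-eigenspace, and read off everything from the two traces $\tr\E_i=m(\lambda_i)$ and $\tr(\D\E_i)=\sum_{u\in V_1}m_u(\lambda_i)-\sum_{v\in V_2}m_v(\lambda_i)$, the latter vanishing for $\lambda_i\neq 0$ by orthogonality of distinct eigenspaces and summing to $\tr\D=n_1-n_2$ via $\sum_i\E_i=\I$. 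Your argument is somewhat more structural and avoids the (implicit) appeal to invertibility of a Vandermonde matrix, handling the zero eigenvalue by a global completeness identity rather than a separate counting step; the paper's walk-counting argument is more elementary in that it never leaves the combinatorics of $(\A^{\ell})_{uu}$. Both are complete and yield exactly the stated formulas, including the degenerate case $n_1=n_2$ when $0$ is not an eigenvalue.
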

\begin{proof}
By counting in two ways the number of closed $\ell$-walks rooted at a vertex we have that $\sum_{u\in V_1}(\A^{\ell})_{uu}=\sum_{v\in V_2}(\A^{\ell})_{vv}$. Then,
$$
\sum_{u\in V_1}\sum_{\lambda_i\neq 0}m_u(\lambda_i)\lambda_i^{\ell}=\sum_{v\in V_2}\sum_{\lambda_i\neq 0}m_v(\lambda_i)\lambda_i^{\ell}=\frac{1}{2}\sum_{\lambda_i\neq 0} m(\lambda_i)\lambda_i^{\ell}
$$
and, hence,
$$
\sum_{\lambda_i\neq 0}\left(\sum_{u\in V_1}m_u(\lambda_i)-\frac{m(\lambda_i)}{2}\right) \lambda_i^{\ell}=0,\qquad \ell=0,1,\ldots
$$
and $(a)$ follows since $\sum_{u\in V_1}m_u(\lambda_i)+\sum_{v\in V_2}m_v(\lambda_i)=m(\lambda_i)$. To prove $(b1)$, we use that $\sum_{u\in V_1}\sum_{i=0}^d m_u(\lambda_i)=n_1$ and $(a)$. Then,
\begin{align*}
n_1 & =\sum_{\lambda_i\neq 0}\sum_{u\in V_1}m_u(\lambda_i)+\sum_{u\in V_1}m_u(0)
=\frac{1}{2}\sum_{\lambda_i\neq 0}m(\lambda_i)+\sum_{u\in V_1}m_u(0)\\
 & =\frac{1}{2}(n-m(0))+\sum_{u\in V_1}m_u(0).
\end{align*}
Similarly for $(b2)$.
\end{proof}

This result suggests to define the scalar products $\langle f,g\rangle_{V_1}$ and $\langle f,g\rangle_{V_2}$, by taking the weights $w_{1,i}=\frac{1}{n_1}\sum_{u\in V_1}m_u(\lambda_i)$ and $w_{2,i}=\frac{1}{n_2}\sum_{v\in V_2}m_v(\lambda_i)$, $i=0,\ldots, d$, respectively. In this case, notice that
\begin{equation}
\label{average-scal-prod}
\langle f,g\rangle_{V_1}=\frac{1}{n_1}\sum_{u\in V_1}\langle f,g\rangle_u\qquad \mbox{and} \qquad
\langle f,g\rangle_{V_2}=\frac{1}{n_2}\sum_{v\in V_2}\langle f,g\rangle_v.
\end{equation}
Then, by Lema \ref{loc-mul-bip},
\begin{align}
\label{average-scal-prod-2}
\langle f,g\rangle_{V_1} & =\frac{1}{2n_1}\left(\sum_{i=0} m(\lambda_i)f(\lambda_i)g(\lambda_i)+ (n_1-n_2)f(0)g(0)\right) \nonumber\\
  & =\frac{n}{2n_1}\langle f,g\rangle_V+ \frac{1}{2}\left(1-\frac{n_2}{n_1}\right)f(0)g(0),
\end{align}
and similarly for $\langle f,g\rangle_{V_2}$.
The corresponding sequences of orthogonal polynomials and their sums are denoted by
$p_{i,0},p_{i,1},\ldots$ and $q_{i,0},q_{i,1},\ldots$, $i=1,2$, respectively. There are four cases to be considered:
\begin{itemize}
\item[$(i)$]
If $n_1=n_2$, then $\langle f,g\rangle_{V_1}= \langle f,g\rangle_{V_2}=\langle f,g\rangle_{V}$, and $p_{1,i}=p_{2,i}=p_i$, with $p_i$ being the predistance polynomial for $i=0,\ldots,d$.
\item[$(ii)$]
If $d$ is odd, then $0$ is not an eigenvalue of $\G$, $\langle f,g\rangle_{V_1}= \frac{n}{2n_1}\langle f,g\rangle_{V}$, and $\langle f,g\rangle_{V_1}= \frac{n}{2n_2}\langle f,g\rangle_{V}$. Then, from the normalization conditions, $p_{1,i}=\frac{2n_1}{n}p_i$ and $p_{2,i}=\frac{2n_2}{n}p_i$, $i=0,\ldots,d$.
\item[$(iii)$]
If $d$ is even and $m(0)\neq |n_1-n_2|$, then $w_{1,d/2},w_{2,d/2}\neq 0$, and the expressions for $\langle f,g\rangle_{V_1}$, in \eqref{average-scal-prod-2}, and $\langle f,g\rangle_{V_2}$ have $d+1$ terms.
\item[$(iv)$]
If $d$ is even and $m(0)= |n_1-n_2|$, say $m(0)=n_1-n_2$, then $w_{1,d/2}\neq 0$ but $w_{2,d/2}=0$. Thus, the  scalar product $\langle f,g\rangle_{V_2}$ is defined on the mesh of the $d$ nonzero eigenvalues and, in this case, we denote the corresponding sequence of orthogonal polynomials by $p_{2,0}^*,p_{2,1}^*,\ldots,p_{2,d-1}^*$.
\end{itemize}

\section{Distance-biregular graphs}
A bipartite graph $\G$ with stable sets $V_1,V_2$ is {\em distance-biregular} when it is distance-regular around each vertex $u$ and the intersection array only depends on the stable set $u$ belongs to. For some properties of these graphs, see Delorme \cite{d94}. An example is the subdivided Petersen graph ${\O}_3$, obtained from such a graph by inserting a vertex into each of its edges, see Fig. \ref{fig:bibiPetersen}.
\begin{figure}
\label{fig:bibiPetersen}
\begin{center}
\includegraphics[width=9cm]{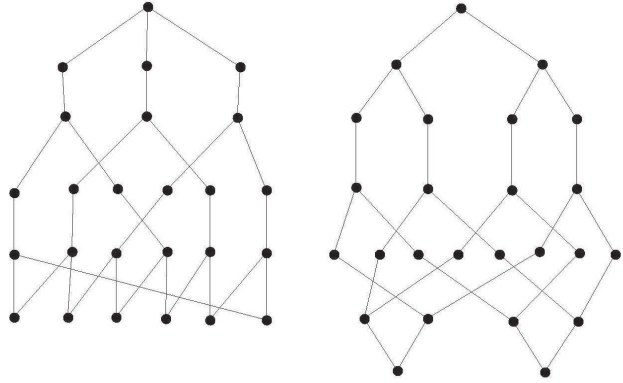}
\caption{The graph ${\O}_3$ as an example of distance-biregular graph.}
\end{center}
\end{figure}

In particular, every distance-biregular graph $\G$ is
{\em semiregular}, that is, all vertices in the same independent set have the same degree. If vertices in $V_i$ have degree $\delta_i$, $i=1,2$, we say that $\G$ is {\em $(\delta_1,\delta_2)$-semiregular}.
In this case, the normalized positive eigenvector of $\G$ is
 $\vecalpha=(\alpha_1^{n_1},\alpha_2^{n_2})^{\top}$, where
\begin{equation}
\label{alpha-bireg}
\alpha_1=\sqrt{\frac{\delta_1+\delta_2}{2\delta_2}}=\sqrt{\frac{n}{2n_1}}
\qquad\mbox{and}\qquad
\alpha_2=\sqrt{\frac{\delta_1+\delta_2}{2\delta_1}}=\sqrt{\frac{n}{2n_2}}.
\end{equation}

A more general framework for studying distance-biregular graphs, which we use here, is that of pseudo-distance-regularity, a concept introduced by Fiol, Garriga, and Yebra in  \cite{fgy96b}.
A graph $\G$ is said to be {\em pseudo-distance-regular around a vertex} $u\in V$ with eccentricity $\ecc(u)=\excu$
if the numbers, defined for any vertex $v\in \G_i(u)$,
$$
c_i^*(v)  =  \frac{1}{\alpha_v}\sum_{w\in\G_{i-1}(u)\cap \G(v)}\alpha_w,\quad
a_i^*(v)  =  \frac{1}{\alpha_v}\sum_{w\in\G_{i}(u)\cap \G(v)}\alpha_w,\quad
b_i^*(v)  =  \frac{1}{\alpha_v}\sum_{w\in\G_{i+1}(u)\cap \G(v)}\alpha_w,
$$
depend only on the value of $i=0,1,\ldots,\excu$. In this case, we denote them by $c_i^*$, $a_i^*$, and $b_i^*$, respectively, and they are referred to as the {\em $u$-local pseudo-intersection numbers} of $\G$.

In the main result of this paper we use the following theorems of Fiol and Garriga \cite{fg97,fg99,f12}. The first one can be considered as the spectral excess theorem for pseudo-distance-regular graphs:
\begin{theo}[\cite{fg97,fg99}]
\label{teo caract c-c-r}
Let $u$ be a vertex of a graph $\G$, with $d_u+1$ distinct local eigenvalues, and sum of predistance polynomials $q_{j}^u=p_0^u+\cdots +p_{j}^u$. Then, for any polynomial $r\in \Re_j[x]$ with $j\le d_u$,
 \begin{equation}
 \label{basic-ineq}
 \frac{r(\lambda_0)}{\|r\|_i}\le \frac{1}{\alpha_u}\|\vecrho_{N_j(u)}\|,
 \end{equation}
and equality holds if and only if $r=\eta q_{j}^u$ for any $\eta\in \Re$. Moreover, equality holds with $j=d_u-1$ if and only $u$ has maximum eccentricity $d_u$ and $\G$ is pseudo-distance-regular around $u$.
\end{theo}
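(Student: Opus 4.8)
The plan is to read the left-hand side of \eqref{basic-ineq} as a Cauchy--Schwarz ratio in the vertex space $\Re^V$ and to identify the extremal polynomial through the reproducing property of the sums $q_j^u$. I would begin with two facts about the column vector $r(\A)\e_u$. Since $\A$ is symmetric, $\|r\|_u^2=\langle r,r\rangle_u=(r(\A)^2)_{uu}=\|r(\A)\e_u\|^2$, so the local norm of $r$ is the Euclidean length of $r(\A)\e_u$; and if $\deg r\le j$ then $(r(\A))_{vu}=0$ whenever $\dist(u,v)>j$, so $r(\A)\e_u$ is supported on the ball $N_j(u)$. Because $\vecalpha$ is the $\lambda_0$-eigenvector,
\[
\langle r(\A)\e_u,\vecalpha\rangle=\e_u^{\top}r(\A)\vecalpha=r(\lambda_0)\,\e_u^{\top}\vecalpha=r(\lambda_0)\alpha_u ,
\]
and, by the support property, this equals $\langle r(\A)\e_u,\vecrho_{N_j(u)}\rangle$. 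Cauchy--Schwarz then gives at once
\[
r(\lambda_0)\alpha_u=\langle r(\A)\e_u,\vecrho_{N_j(u)}\rangle\le\|r(\A)\e_u\|\,\|\vecrho_{N_j(u)}\|=\|r\|_u\,\|\vecrho_{N_j(u)}\| ,
\]
which is \eqref{basic-ineq}, with equality exactly when $r(\A)\e_u$ is a positive multiple of $\vecrho_{N_j(u)}$.

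To extract the extremal $r$ I would factor this bound through $q_j^u$. The algebraic key is that $q_j^u$ reproduces evaluation at $\lambda_0$: expanding $r=\sum_{i\le j}c_ip_i^u$ and using $\langle p_i^u,p_k^u\rangle_u=\delta_{ik}\,p_i^u(\lambda_0)$ gives $\langle r,q_j^u\rangle_u=\sum_i c_ip_i^u(\lambda_0)=r(\lambda_0)$, so in particular $\|q_j^u\|_u^2=q_j^u(\lambda_0)$. Cauchy--Schwarz in $(\Re_j[x],\langle\cdot,\cdot\rangle_u)$ now yields $r(\lambda_0)\le\|r\|_u\sqrt{q_j^u(\lambda_0)}$ with equality iff $r=\eta q_j^u$, so \eqref{basic-ineq} splits as
\[
\frac{r(\lambda_0)}{\|r\|_u}\le\sqrt{q_j^u(\lambda_0)}\le\frac{1}{\alpha_u}\,\|\vecrho_{N_j(u)}\| ,
\]
the second step being the geometric Cauchy--Schwarz applied to $r=q_j^u$. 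Consequently equality in \eqref{basic-ineq} forces the polynomial form $r=\eta q_j^u$ and is then equivalent to the structural identity $q_j^u(\A)\e_u=\mu\,\vecrho_{N_j(u)}$; it is this second condition that the ``moreover'' clause will interpret for $j=d_u-1$.

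For that last assertion I would use the local Hoffman identity $q_{d_u}^u(\A)\e_u=\alpha_u^{-1}\vecalpha$, which follows from $q_{d_u}^u(\lambda_0)=1/m_u(\lambda_0)$, $q_{d_u}^u(\lambda_i)=0\ (i\neq0)$ and $\E_0=\tfrac1n\vecalpha\vecalpha^{\top}$. First, since the $\e_u$-minimal polynomial of $\A$ has degree $d_u+1$, the vector $\vecalpha=\alpha_u q_{d_u}^u(\A)\e_u$ does not lie in the span $U$ of $\e_u,\A\e_u,\dots,\A^{d_u-1}\e_u$; hence the structural identity cannot hold when $N_{d_u-1}(u)=V$, and as $\ecc(u)\le d_u$ always, equality forces $\ecc(u)=d_u$, so $V\setminus N_{d_u-1}(u)=\G_{d_u}(u)$. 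Writing $p_{d_u}^u=q_{d_u}^u-q_{d_u-1}^u$ and substituting the equality $q_{d_u-1}^u(\A)\e_u=\mu\,\vecrho_{N_{d_u-1}(u)}$ gives
\[
p_{d_u}^u(\A)\e_u=\frac{1}{\alpha_u}\vecalpha-\mu\,\vecrho_{N_{d_u-1}(u)} ;
\]
the orthogonality $p_{d_u}^u\perp q_{d_u-1}^u$ (i.e.\ $\langle p_{d_u}^u(\A)\e_u,\vecrho_{N_{d_u-1}(u)}\rangle=0$) then forces $\mu=1/\alpha_u$ and leaves $p_{d_u}^u(\A)\e_u=\alpha_u^{-1}\vecrho_{\G_{d_u}(u)}$, i.e.\ the top predistance polynomial sends $\e_u$ to a multiple of the outermost sphere-weight vector.

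The main work --- and the step I expect to be the real obstacle --- is to promote this single top-level identity to the whole family $p_i^u(\A)\e_u=\alpha_u^{-1}\vecrho_{\G_i(u)}$ for $0\le i\le d_u$, which is precisely pseudo-distance-regularity around $u$ (it makes the numbers $c_i^*(v),a_i^*(v),b_i^*(v)$ independent of $v$). I would argue by downward induction on the local three-term recurrence $xp_i^u=b_{i-1}p_{i-1}^u+a_ip_i^u+c_{i+1}p_{i+1}^u$: applying $\A$ to a vector supported on one sphere redistributes it onto the three consecutive spheres, and since $p_i^u(\A)\e_u$ reaches no farther than $\G_i(u)$, matching the restrictions sphere by sphere (starting at $\G_{d_u}(u)$, where $c_{d_u+1}=0$) forces each $p_i^u(\A)\e_u$ to be the clean multiple $\alpha_u^{-1}\vecrho_{\G_i(u)}$ and simultaneously identifies $a_i,b_i,c_i$ with the pseudo-intersection numbers. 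The converse is immediate: pseudo-distance-regularity gives $p_i^u(\A)\e_u=\alpha_u^{-1}\vecrho_{\G_i(u)}$, hence $q_{d_u-1}^u(\A)\e_u=\alpha_u^{-1}\vecrho_{N_{d_u-1}(u)}$, and equality holds.
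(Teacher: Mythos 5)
The paper does not actually prove Theorem~\ref{teo caract c-c-r}: it is imported from \cite{fg97,fg99}, so there is no internal proof to compare yours against. Judged on its own, your argument follows essentially the route of those references and is sound where it is detailed: the two Cauchy--Schwarz steps (the ``geometric'' one pairing $r(\A)\e_u$ with $\vecrho_{N_j(u)}$, and the ``algebraic'' one pairing $r$ with $q_j^u$ via the reproducing identity $\langle r,q_j^u\rangle_u=r(\lambda_0)$), the local Hoffman identity $q_{d_u}^u(\A)\e_u=\alpha_u^{-1}\vecalpha$, the Krylov-dimension argument forcing $\ecc(u)=d_u$, and the computation $\mu=1/\alpha_u$ are all correct. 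Your reading of the equality clause is also the accurate one: equality in \eqref{basic-ineq} forces $r=\eta q_j^u$ \emph{together with} the vector identity $q_j^u(\A)\e_u=\alpha_u^{-1}\vecrho_{N_j(u)}$, and it is precisely this second condition that the ``moreover'' clause interprets.

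The one place where you have an outline rather than a proof is the downward induction, and the observation that closes it deserves to be made explicit. Assuming $p_l^u(\A)\e_u=\alpha_u^{-1}\vecrho_{\G_l(u)}$ for all $l>i$, the recurrence gives $b_i\,p_i^u(\A)\e_u=\alpha_u^{-1}\bigl(\A\vecrho_{\G_{i+1}(u)}-a_{i+1}\vecrho_{\G_{i+1}(u)}-c_{i+2}\vecrho_{\G_{i+2}(u)}\bigr)$; restricting to $\G_{i+1}(u)$ and $\G_{i+2}(u)$ yields the constancy of $a_{i+1}^*$ and $c_{i+2}^*$, but restricting to $\G_i(u)$ only gives $(p_i^u(\A)\e_u)_v=(b_i\alpha_u)^{-1}b_i^*(v)\alpha_v$, which is \emph{not yet} the clean multiple $\alpha_u^{-1}\alpha_v$. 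What forces $b_i^*(v)=b_i$ is that $q_{i-1}^u(\A)\e_u=q_i^u(\A)\e_u-p_i^u(\A)\e_u$ comes from a polynomial of degree $i-1$ and hence must vanish on $\G_i(u)$, while $q_i^u(\A)\e_u=\alpha_u^{-1}\vecrho_{N_i(u)}$ is already known from the higher levels; together with $b_i\neq 0$ this closes the induction and simultaneously identifies the recurrence coefficients with the pseudo-intersection numbers. With that support argument inserted, your proof is complete; the converse direction you assert is standard.
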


A graph $\G$ is said to be {\em pseudo-distance-regularized} when it is pseudo-distance-regular graph around each of its vertices. Generalizing a result of  Godsil and Shawe-Taylor \cite{gst87}, the author proved the following:

\begin{theo}[\cite{f12}]
\label{pdrg=drg}
Every  pseudo-distance-regularized graph $\G$ is either distance-regular or distance-biregular.
\end{theo}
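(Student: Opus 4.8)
The plan is to read off the whole dichotomy from the single weighted parameter $c_1^*$, and then to treat the bipartite and non-bipartite cases separately. First I would observe that pseudo-distance-regularity around a vertex $u$ already pins down the weights of its neighbours: for every $v\in\G(u)=\G_1(u)$ one has $\G_0(u)\cap\G(v)=\{u\}$, so $c_1^*(v)=\alpha_u/\alpha_v$, and the hypothesis that this number is independent of $v$ forces $\alpha_v$ to be constant over $\G(u)$. Thus all neighbours of any vertex carry the same value of the positive eigenvector $\vecalpha$. Propagating this along walks, the value $\alpha_x$ alternates with the parity of the walk from a fixed base vertex; by connectivity $\vecalpha$ therefore takes at most two values, which must alternate across every edge. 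If $\G$ is not bipartite, an odd closed walk forces these two values to coincide, so $\vecalpha$ is constant.

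In the non-bipartite case $\vecalpha$ is a constant multiple of the all-ones vector, hence that vector is a $\lambda_0$-eigenvector and $\G$ is regular of degree $\lambda_0$; with constant weights the pseudo-intersection numbers $c_i^*,a_i^*,b_i^*$ coincide with the ordinary ones, so $\G$ is distance-regular around each vertex. In the bipartite case $\vecalpha$ takes exactly two values $\alpha_1,\alpha_2$ on $V_1,V_2$, and the uniform weighted degree $b_0^*=\lambda_0$ forces each class to be regular, so $\G$ is $(\delta_1,\delta_2)$-semiregular with $\alpha_1,\alpha_2$ given by \eqref{alpha-bireg}. Since each layer $\G_i(u)$ lies in a single stable set, $\vecalpha$ is constant on it, and the defining sums give $c_i^*(v)=(\alpha_{[i-1]}/\alpha_{[i]})\,|\G_{i-1}(u)\cap\G(v)|$ (similarly for $a_i^*,b_i^*$), where $[i]$ is the parity class of the layer. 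As the prefactor depends only on $i$, constancy of the weighted parameters is equivalent to constancy of the ordinary ones; hence $\G$ is distance-regular around each vertex, with all $a_i=0$.

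Both cases now reduce to the same core assertion: two vertices carrying the same weight have the same intersection array (equivalently, in the bipartite case, the array depends only on the stable set). Since a graph that is distance-regular around $u$ has $\ecc(u)+1$ distinct local eigenvalues and recovers its array from the tridiagonal recurrence matrix, this is equivalent to showing that the local eigenvalues and local multiplicities $m_u(\lambda_i)=(\E_i)_{uu}$ are the same for all $u$ in a class. I would first record the easy endpoints: $\E_0=\tfrac1n\vecalpha\vecalpha^{\top}$ gives $m_u(\lambda_0)=\alpha_u^2/n$, already constant on each class, and the same holds at $\lambda_d$ by bipartite symmetry, while Lemma \ref{loc-mul-bip}$(a)$ supplies the average $\frac{1}{n_1}\sum_{u\in V_1}m_u(\lambda_i)=\frac{m(\lambda_i)}{2n_1}$ of the interior ones.

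The main obstacle is precisely the passage from these averages to vertex-by-vertex equality, since the summation identity alone does not force the local multiplicities to be constant on a class. Here I would exploit that each vertex is \emph{extremal}: by the ``moreover'' part of Theorem \ref{teo caract c-c-r}, pseudo-distance-regularity around $u$ is equivalent to equality in \eqref{basic-ineq} at $j=\ecc(u)-1$, attained by the $u$-local polynomial $q_{j}^u$. Averaging these individually tight bounds over a class and comparing with the inequality written for the class scalar product $\langle\,,\rangle_{V_1}$ of \eqref{average-scal-prod-2}, a Cauchy--Schwarz/convexity argument should turn the averaged equality into the statement that all the extremal polynomials $q_{j}^u$, hence all the local measures, coincide across the class; connectivity of the class (the relevant halved graph is connected because $\G$ is) is what lets such a comparison propagate globally. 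Establishing this rigidity cleanly — ruling out distinct arrays on a class whose local measures happen to average to the correct class spectrum — is the delicate step, and is where I expect the real work to lie.
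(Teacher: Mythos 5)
First, note that the paper does not actually prove this statement: Theorem~\ref{pdrg=drg} is quoted from \cite{f12}, so there is no internal proof to compare against. Judged on its own terms, your proposal handles the first half correctly: the observation that $c_1^*(v)=\alpha_u/\alpha_v$ forces $\vecalpha$ to be constant on each neighbourhood, hence at most two-valued and alternating along walks, hence constant when $\G$ is non-bipartite and two-valued on the stable sets otherwise; from this, regularity (resp.\ semiregularity) follows, the pseudo-intersection numbers become fixed multiples of the ordinary ones on each layer, and $\G$ is distance-regular around every vertex.

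The genuine gap is at exactly the point you flag. What you have established is that $\G$ is \emph{distance-regularised} (distance-regular around each vertex, with a priori vertex-dependent arrays); the theorem requires the array to be the same for all vertices, or to depend only on the stable set. That implication is precisely the Godsil--Shawe-Taylor theorem \cite{gst87}, which the paper explicitly says Theorem~\ref{pdrg=drg} generalizes, and it is a substantial combinatorial result, not a formality. Your proposed route to it does not work: averaging the bound \eqref{basic-ineq} over a class and demanding equality only recovers the statement that each individual vertex attains equality, i.e.\ that each vertex is pseudo-distance-regular around itself --- which is the hypothesis you already have --- and says nothing about the local measures of two distinct vertices coinciding. (This is exactly how the averaging is used in the proof of Theorem~\ref{teo caract dist-bireg}, where the conclusion drawn from equality is per-vertex pseudo-distance-regularity, after which Theorem~\ref{pdrg=drg} is invoked as a separate input; reusing that mechanism here would be circular.) Two vertices of the same class could in principle be distance-regular around themselves with different arrays while their local multiplicities still average to $m(\lambda_i)/2n_1$; excluding this requires the combinatorial propagation of intersection numbers between adjacent vertices carried out by Godsil and Shawe-Taylor, or the argument of \cite{f12}, and no Cauchy--Schwarz or convexity argument on class averages substitutes for it.
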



\subsection*{The spectral excess theorem for distance-biregular graphs}

Now we are ready to prove the main result of the paper. 

\begin{theo}
\label{teo caract dist-bireg}
Let $\G$ be a bipartite $(\delta_1,\delta_2)$-regular on $n_1+n_2$ vertices, and with $d+1$ distinct eigenvalues. 
Let $\G$ have average numbers $\overline{k}_d$, $\overline{k}_{1,d}$, $\overline{k}_{2,d}$, $\overline{k}_{2,d-1}$,
and highest degree polynomials $p_d$, $p_{1,d}$, $p_{2,d}$, $p_{2,d-1}^*$ defined as above.
Then, $\G$ is distance-biregular if and only if some of the following conditions holds:
\begin{itemize}
\item [$(a)$]
$d$ is odd or $n_1=n_2$, and $\overline{k}_d=p_d(\lambda_0)$.
\item [$(b)$]
$d$ is even, $m(0)\neq |n_1-n_2|$, and $\overline{k}_{1,d}=p_{1,d}(\lambda_0)$, $\overline{k}_{2,d}=p_{2,d}(\lambda_0)$.
\item [$(c)$]
$d$ is even, $m(0)=|n_1-n_2|$, say $m(0)=n_1-n_2$, and $\overline{k}_{1,d}=p_{1,d}(\lambda_0)$, $\overline{k}_{2,d-1}=p_{2,d-1}^*(\lambda_0)$.
\end{itemize}
\end{theo}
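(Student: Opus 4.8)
The plan is to reduce the theorem to the local spectral excess theorem (Theorem~\ref{teo caract c-c-r}) applied at every vertex, to average the resulting inequalities over each stable set separately, and finally to invoke Theorem~\ref{pdrg=drg}. First I would fix a vertex $u$ with $d_u+1$ distinct local eigenvalues and apply \eqref{basic-ineq} with the optimal choice $r=q^u_{d_u-1}$. Using $q^u_{d_u}(\lambda_0)=1/m_u(\lambda_0)=n/\alpha_u^2$ (from Lemma~\ref{ortho-pol}$(c)$ and $(\E_0)_{uu}=\alpha_u^2/n$), the fact that the eccentricity of $u$ never exceeds $d_u$, and $\|\vecrho_{N_{d_u-1}(u)}\|^2=n-\|\vecrho_{\G_{d_u}(u)}\|^2$, this rearranges to
\begin{equation*}
\frac{1}{\alpha_u^2}\|\vecrho_{\G_{d_u}(u)}\|^2\le p^u_{d_u}(\lambda_0),
\end{equation*}
with equality, by the last sentence of Theorem~\ref{teo caract c-c-r}, if and only if $u$ has eccentricity $d_u$ and $\G$ is pseudo-distance-regular around $u$. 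Since $\G$ is bipartite, $\G_{d_u}(u)$ lies entirely in one stable set, so the left-hand side is $(\alpha^2/\alpha_u^2)\,k_{d_u}(u)$; the factor $\alpha^2/\alpha_u^2$ is $1$ precisely when $d_u$ is even (same set as $u$) and equals $\alpha_2^2/\alpha_1^2=n_1/n_2$ or its inverse when $d_u$ is odd, by \eqref{alpha-bireg}.

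Next I would sum these inequalities over $u\in V_1$ and over $v\in V_2$ and pass from the local polynomials $p^u_{d_u}$ to the set-averaged polynomials $p_{1,d}$, $p_{2,d}$, $p_{2,d-1}^*$. The bridge is \eqref{average-scal-prod}, which realises $\langle\,\cdot\,,\cdot\,\rangle_{V_i}$ as the mean of the local products $\langle\,\cdot\,,\cdot\,\rangle_u$, together with the observation that, by Lemma~\ref{ortho-pol}$(b)$, once $m_u(\lambda_0)=1/(2n_i)$ is fixed the value $p^u_{d_u}(\lambda_0)$ is a fixed concave function $F$ of the local-multiplicity vector $(m_u(\lambda_i))_i$, of harmonic-mean shape $F=\bigl(1+\sum_{i\ge1}c_i/m_u(\lambda_i)\bigr)^{-1}$. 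Extending $F$ by $0$ to vectors with a vanishing coordinate keeps $k_{d_u}(u)\le F$ valid even when $d_u<d$ (then $k_d(u)=0$), so Jensen's inequality for the mean over each set yields $\frac1{n_i}\sum_{u\in V_i}p^u_{d_u}(\lambda_0)\le p_{i,d}(\lambda_0)$ (respectively $p_{2,d-1}^*(\lambda_0)$). Which eigenvalue data survive on each set is governed by Lemma~\ref{loc-mul-bip}: if $d$ is odd or $n_1=n_2$ the two set-products coincide with $\langle\,\cdot\,,\cdot\,\rangle_V$ and everything collapses to $p_d$ and $\overline{k}_d$ (case $(a)$); if $d$ is even the sign of $m(0)-|n_1-n_2|$ decides whether $0$ persists as a local eigenvalue on both sets (case $(b)$, far distance $d$ on each set) or only on the larger set (case $(c)$, where on the smaller set $0$ disappears, $d_v=d-1$, and the far distance is the odd number $d-1$).

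Chaining the two steps gives, in each regime, an inequality of the form $\overline{k}_{i,d}\le p_{i,d}(\lambda_0)$, up to the fixed factor $\alpha_1^2/\alpha_2^2=n_2/n_1$ that appears exactly in the odd far-distance situation of case $(c)$ and must be carried through the normalisation of $p_{2,d-1}^*$. Under the hypotheses each such chain is an equality, so both the Jensen step and every individual local inequality are tight. Tightness of Jensen forces all local-multiplicity vectors in a set to coincide --- here one uses that $F$ is strictly concave on the affine slice $\sum_i m_u(\lambda_i)=1$, which is transverse to the single degenerate (radial) direction of its Hessian, so no nontrivial equality configuration survives --- and tightness of each local inequality gives, by Theorem~\ref{teo caract c-c-r}, that $\G$ is pseudo-distance-regular around every vertex. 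Hence $\G$ is pseudo-distance-regularized, and Theorem~\ref{pdrg=drg} forces it to be distance-regular or distance-biregular; in cases $(b)$ and $(c)$ we have $n_1\neq n_2$, hence $\delta_1\neq\delta_2$, so $\G$ is non-regular and the only option is distance-biregular, while in case $(a)$ regularity makes the two notions coincide. The converse is routine: a distance-biregular graph is pseudo-distance-regular around each vertex with local multiplicities constant on each set, so local and set-averaged quantities agree and all the displayed inequalities become equalities.

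The hard part will be the middle step: proving the concavity of $F$ with the correct equality case on the normalisation slice, and, inseparably, tracking the parity of the far distance through the three regimes, since this is what dictates whether the clean comparison is with $p_d$, with the pair $(p_{1,d},p_{2,d})$, or with $(p_{1,d},p_{2,d-1}^*)$, and where the factor $n_2/n_1$ must be accounted for. A secondary but genuine point is to confirm, after Theorem~\ref{pdrg=drg}, that the alternative actually realised is distance-biregular, and to match constants so that each stated condition reduces to the bare equality $\overline{k}=p(\lambda_0)$.
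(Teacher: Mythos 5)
Your route is genuinely different from the paper's, and I believe it can be made to work, but the step you yourself flag as ``the hard part'' is precisely the step the paper's choice of test polynomial eliminates. You apply \eqref{basic-ineq} with the \emph{locally optimal} polynomial $r=q^u_{d_u-1}$, obtaining $\frac{1}{\alpha_u^2}\|\vecrho_{\G_{d_u}(u)}\|^2\le p^u_{d_u}(\lambda_0)$ at each vertex, and then must compare the average of the local optima $p^u_{d_u}(\lambda_0)$ with the optimum $p_{i,d}(\lambda_0)$ for the averaged weights; this forces you to prove concavity (with its equality case) of the harmonic-mean-shaped functional of Lemma~\ref{ortho-pol}$(b)$ on the multiplicity simplex. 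The paper instead plugs the single \emph{set-averaged} polynomial $r=q_{i,d-1}$ into \eqref{basic-ineq} at every $u\in V_i$ (noting the inequality survives when $d_u<d$ by passing to the quotient ring and using $\|\vecrho_{N_j(u)}\|\le\|\vecrho_{N_{d-1}(u)}\|$); then $\frac{1}{n_i}\sum_u\|q_{i,d-1}\|_u^2=\|q_{i,d-1}\|_{V_i}^2=q_{i,d-1}(\lambda_0)$ is an identity via \eqref{average-scal-prod}, and the only analytic input is the harmonic-vs-arithmetic mean inequality applied to the numbers $n-\|\vecrho_{\G_d(u)}\|^2$. So your approach buys a conceptually natural ``Jensen on the spectrum-to-excess map'' picture at the cost of an extra nontrivial lemma; the paper's buys a shorter proof at the cost of a less canonical choice of $r$. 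Your concavity claim is in fact correct (the Hessian of the weighted harmonic mean has exactly the radial kernel, and the slice $m_u(\lambda_0)=1/(2n_i)$ is transverse to it), but you must make explicit that equality in Jensen forces every $m_u$ to equal the interior point $\overline{m}$, hence $d_u=d$ for all $u$ --- otherwise the vertices with $d_u<d$, for which your extended bound degenerates to $0\le 0$, would contribute nothing to the equality analysis and Theorem~\ref{teo caract c-c-r} could not be invoked at them. Two further points to reconcile with the paper: in case $(a)$ with $d$ odd the paper does not run the machinery at all but shows the hypothesis forces $n_1=n_2$ (via $n_1\overline{k}_{1,d}=n_2\overline{k}_{2,d}$), reducing to the classical spectral excess theorem; and in case $(c)$ the factor $n_2/n_1$ you carry must be checked against the normalization $w_0=1/(2n_2)$ of $p_{2,d-1}^*$ so that the stated condition is the bare equality $\overline{k}_{2,d-1}=p_{2,d-1}^*(\lambda_0)$.
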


\begin{proof}
We only take care of the difficult part, which is to prove sufficiency.

$(a)$ If $n_1=n_2$, then $\G$ is regular and the result is just the spectral excess theorem for distance-regular graphs. If $d$ is odd we are in case $(ii)$ discussed above, and $p_{1,d}$ and $p_{2,d}$ are multiples of the predistance polynomial $p_{d}$. Then, reasoning as in the proof of case $(b)$, see below, we have that $\G$ is distance-biregular if and only if $p_{1,d}(\lambda_0)=\frac{2n_1}{n}p_d(\lambda_0)=\overline{k}_{1,d}$ and
$p_{2,d}(\lambda_0)=\frac{2n_2}{n}p_d(\lambda_0)=\overline{k}_{2,d}$.
But, as $d$ is odd, by counting in two ways the number of
pairs of vertices at distance $d$, we have that
$n_1\overline{k}_{1,j}=n_2\overline{k}_{2,j}$. Then, putting all together, we get that $n_1^2=n_2^2$, and $\G$ must be again regular.

To prove $(b)$, we first claim that, with $r=q_{1,d-1}$,  the
inequality \eqref{basic-ineq}  can be written as
\begin{equation}
\label{first-global-inequal}
\frac{\alpha_u^2 q_{1,d-1}(\lambda_0)^2}{\|\vecrho_{N_{d-1}(u)}\|^2} \le \|q_{1,d-1}\|_u^2.
\end{equation}
Indeed, if $d_u=d$ the result is obvious, whereas  if $d_u< d$, then $q_{d-1}=r\in \Re_{d_u}[x]$. Here equality must be understood in the quotient ring $\Re[x]/I$, where $I$ is the ideal generated by the polynomial with zeros the $d_u+1$  eigenvalues having non-null $u$-local multiplicity. Thus, $\|\vecrho_{N_j(u)}\|\le \|\vecrho_{N_{d-1}(u)}\|$, where $j=\deg r$, and the inequality in \eqref{basic-ineq} still holds.

Now, by taking the average over all vertices of $V_1$, we have
$$
\frac{q_{1,d-1}(\lambda_0)^2}{n_1}\sum_{u\in V_1}\frac{\alpha_u^2}{\|\vecrho_{N_{d-1}(u)}\|^2}
\le
\frac{1}{n_1}\sum_{u\in V}\|q_{1,d-1}\|_u^2
=
\|q_{1,d-1}\|_{V_1}^2
=
q_{1,d-1}(\lambda_0),
$$
where we used  (\ref{average-scal-prod}). Consequently,
\begin{equation}\label{q_k<=H}
q_{1,d-1}(\lambda_0)\le \frac{n_1}{\sum_{u\in V_1}\frac{\alpha_u^2}{\|\vecrho_{N_{d-1}(u)}\|^2}}=\frac{2n_1}{n}h_{d-1},
\end{equation}
where we used that, as $d$ is even, $\alpha_u^2=n/2n_1$, and $h_{d-1}$ stands for the harmonic mean of the numbers $\|\vecrho_{N_{d-1}(u)}\|^2=\|\vecrho_V\|^2 -\|\vecrho_{\G_{d}(u)}\|^2= n-\|\vecrho_{\G_{d}(u)}\|^2$. Furthermore, since the harmonic mean is at most the arithmetic mean,
$$
\frac{2n_1}{n}h_{d-1}\le \frac{2n_1}{n}\frac{1}{n_1}\sum_{u\in V_1} (n-\|\vecrho_{\G_{d}(u)}\|^2)=
\frac{2}{n}\left(nn_1-\frac{n}{2n_1}\sum_{u\in V_1}k_d(u)\right)=2n_1-\overline{k}_{1,d}.
$$
This, together with $q_{1,d-1}(\lambda_0)=q_{1,d}(\lambda_0)-p_{1,d}(\lambda_0)=2n_1-p_{1,d}(\lambda_0)$ (recall that in the scalar product $\langle \cdot,\cdot \rangle_{V_1}$ we have $w_0=1/2n_1$), gives
\begin{equation}
\label{p_d>=k}
p_{1,d}(\lambda_0)\ge \overline{k}_{1,d}.
\end{equation}
Then, in case of equality all inequalities in \eqref{first-global-inequal} must be also equalities and, from Theorem~\ref{teo caract c-c-r}, $\G$ is pseudo-distance-regular around each vertex $u\in V_1$. In the same way, we prove that, if  $p_{2,d}(\lambda_0)= \overline{k}_{2,d}$, then $\G$ is pseudo-distance-regular around each vertex $v\in V_2$. Consequently, $\G$ is a pseudo-distance-regularized graph and, by Theorem~\ref{pdrg=drg}, it is also distance-biregular.

The proof of $(c)$ is analogous and uses the scalar products of case $(iv)$ at the end of 
Section~\ref{sec:prelim}.
\end{proof}

\subsection*{Observations}
We end the paper with some comments.
\begin{itemize}
\item
Notice that, in cases $(a)$ and $(b)$ the diameters of $\G$ turn to be $D=D_1=D_2=d$,
whereas in case $(c)$ we have $D=D_1=d$ and $D_2=d-1$. As an example of the latter we have again the subdivided Petersen graph of Fig.~\ref{fig:bibiPetersen}.

\item
By using  Lemma \ref{ortho-pol}$(b)$ we can give explicit formulae for the values of the predistance polynomials at $\lambda_0$ appeared in Theorem~\ref{teo caract dist-bireg}.
For instance, in case $(c)$,
\begin{align*}
p_{1,d}(\lambda_0)  & = \textstyle n_2\left(\sum_{i=0}^{d/2-1}\frac{\pi_0^2}{m(\lambda_i)\pi_i^2}+\frac{\pi_0^2}{4m(0)\pi_{d/2}^2} \right)^{-1}, \\
p_{2,d-1}^*(\lambda_0)  & =  \textstyle n_2\left(\sum_{i=0}^{d/2-1}\frac{\pi_0^2}{m(\lambda_i)\pi_i^2}\right)^{-1}.
\end{align*}
where the moment-like parameters $\pi_i's$ are defined as before.

\item
Since the conclusions in $(b)$ and $(c)$  are derived from two inequalities, we can add up
the corresponding equalities to give only one condition, as in $(a)$, and the result still holds. For instance, the condition in case (b) can be
$$
\overline{k}_{1,d}+\overline{k}_{2,d}=(p_{1,d}+p_{2,d})(\lambda_0).
$$
\end{itemize}

\noindent {\bf Acknowledgments.}
Research supported by the Ministerio de Educaci\'on y
Ciencia (Spain) and the European Regional Development Fund under
project MTM2011-28800-C02-01, and by the Catalan Research Council
under project 2009SGR1387.


\end{document}